\theoremstyle{plain}
\newtheorem{theorem}{Theorem}[section]
\newtheorem*{theorem*}{Theorem}
\newtheorem{lemma}[theorem]{Lemma}
\newtheorem{corollary}[theorem]{Corollary}
\newtheorem{proposition}[theorem]{Proposition}
\newcommand\vol{\mathrm{vol}}
\newcommand\tr{\mathrm{tr}}
\newcommand\sys{\mathrm{sys}}
\newcommand\genus{\mathrm{genus}}
\DeclareMathOperator{\SL}{SL}
\DeclareMathOperator{\N}{N}
\DeclareMathOperator{\GL}{GL}
\begin{document}

\title[On growth of systole along congruence coverings]{{\normalsize On growth of systole along congruence coverings of Hilbert modular varieties}}
\author{Plinio G. P. Murillo }

\thanks{\textit{Date:} \today}

\thanks{This work was supported by the Coordenação de Aperfeiçoamento de Pessoal de Nível Superior-CAPES}

\maketitle

\begin{abstract}
We study how the systole of principal congruence coverings of a Hilbert modular variety grows when the degree of the covering goes to infinity. We prove that given a Hilbert modular variety $M$ of real dimension $2n$, the sequence of principal congruence coverings $M_{I}$ eventually satisfies 
$$\sys\pi_{1}(M_{I})\geq \frac{4}{3\sqrt{n}}\log(\vol(M_{I}))-c,$$
where $c$ is a constant independent of $M_{I}$.
\end{abstract}

\section{\textbf{Introduction.}}
The {\it systole} of a Riemannian manifold is the least length of a non-contractible closed geodesic in $M$ and it is denoted by $\sys\pi_{1}(M)$. In 1994, P. Buser and P. Sarnak constructed in \cite{BS94} the first explicit examples of surfaces with systole growing logarithmically with the genus using a sequence of principal congruence coverings of an arithmetic compact Riemann surface. These sequences of surfaces $\lbrace S_{p}\rbrace$ satisfy the inequality
\begin{equation*}
\sys\pi_{1}(S_{p})\geq\frac{4}{3}\log(\genus(S_{p}))-c,
\end{equation*}

where $c$ is independent of $p$. These examples were generalized in 2007 by M. Katz, M. Schaps and U. Vishne to principal congruence coverings of any compact arithmetic Riemann surfaces and arithmetic hyperbolic 3-manifolds \cite{KSV07}. It is known that a sequence of principal congruence coverings of a compact arithmetic hyperbolic manifold attains asymptotically the logarithmic growth of the systole \cite[3.C.6]{Gro96} but the examples above are the only cases where the explicit constant in the systole growth was known so far. In particular, it would be interesting to understand how the asymptotic constant depends on the dimension.\\

The purpose of this paper is to generalize the construction of Buser and Sarnak in the context of Hilbert modular varieties which are non-compact Riemannian manifolds of dimensions $2n$. We will show that the sequence of principal congruence coverings $M_{I}\rightarrow M$ of a Hilbert modular variety eventually satisfies
\begin{equation}\label{lowerbound}
\sys\pi_{1}(M_{I})\geq \frac{4}{3\sqrt{n}}\log([\Gamma:\Gamma(I)]) -c
\end{equation}

where $c$ is a constant independent of $I$. We refer to Theorem \ref{maintheorem} for the precise statement of the main result.\\

Due to the non-compactness of $M$ it is a priori not clear if the systole of $M_{I}$ is bounded above by a logarithmic function of its volume. In fact, an interesting more general question is to understand if the systole of a sequence of congruence coverings of a non-compact finite volume arithmetic manifold of non-positive curvature and not flat grows logarithmically in its volume. An affirmative answer seems very plausible but, to our knowledge, it has not been established in the literature. In this regard we will prove that the sequence of principal congruence coverings $M_{I}\rightarrow M$ of a Hilbert modular variety eventually satisfies
\begin{equation}\label{upperbound}
\sys\pi_{1}(M_{I})\leq 4n^{3/2}\log([\Gamma:\Gamma(I)]).
\end{equation}

We expect that inequality \eqref{lowerbound} is asymptotically sharp while \eqref{upperbound} is not. Let us remark that these results give us the first examples of explicit constants for the growth of systole of a sequences of congruence coverings of arithmetic manifolds in dimensions greater than three.\\ 

We will begin in Section \ref{preliminaries} with recalling basic aspects of the action of $(\SL_{2}(\mathbb{R}))^{n}$ in $(\mathbb{H}^{2})^{n}$. We then define the congruences coverings of a Hilbert modular variety, and prove the systole upper bound \eqref{upperbound}. In Section \ref{mainpart} we estimate the length of closed geodesics of $M_{I}$ in terms on the norm of the ideal $I$, and in Section \ref{primecase} we relate the norm of the ideal $I$ with the index $[\Gamma:\Gamma(I)]$. All together this leads to the proof of inequality \eqref{lowerbound} in Section \ref{proofoftheorem}.\\

\textbf{Acknowledgements}. I would like to thank Mikhail Belolipetsky for his advice, support and many valuable suggestions on the preliminary versions of this work. I would also like to thank Cayo Doria for helpful discussions. 

\section{\textbf{Preliminaries}}\label{preliminaries}
\subsection{The action of $(\SL_{2}(\mathbb{R}))^{n}$ in $(\mathbb{H}^{2})^{n}$}

The group $\SL_{2}(\mathbb{R})$ acts in the upper half plane model of the hyperbolic plane $\mathbb{H}^{2}$ by fractional linear transformations: 

\begin{equation*}
Bz=\frac{az+b}{cz+d}, \hspace{2mm} \mbox{if} \hspace{2mm} B=\begin{pmatrix}
a & b \\
c & d	
\end{pmatrix}
\mbox{and}  \hspace{2mm} z\in\mathbb{H}^{2}.
\end{equation*}

An element $B\in \SL_{2}(\mathbb{R})$ is called \textit{elliptic} if has a fixed point in $\mathbb{H}^{2}$, \textit{parabolic} if has no fixed points in $\mathbb{H}^{2}$ and has only one fixed point in $\partial\mathbb{H}^{2}$ and \textit{hyperbolic} if it has no fixed points in $\mathbb{H}^{2}$ and has two fixed points in $\partial\mathbb{H}^{2}$. An equivalent description is the following:

\begin{flushleft}
$B$ is \textit{elliptic} if and only if $\lvert \tr(B) \rvert < 2$;\\
$B$ is \textit{parabolic} if and only if $\lvert \tr(B) \rvert = 2$;\\
$B$ is \textit{hyperbolic} if and only if $\lvert \tr(B) \rvert > 2$.\\
\end{flushleft} 

where $\tr(B)$ denotes the trace of the matrix $B$.\\

Given a hyperbolic transformation $B$, the \textit{translation length} of $B$, denoted by $\ell_{B}$, is defined by
$$\ell_{B}=\inf\lbrace d_{\mathbb{H}^{2}}(z,Bz);
z\in\mathbb{H}^{2}\rbrace.$$

This infimum is attained in the points on the only geodesic $\bar{\alpha}_{B}$ in $\mathbb{H}^{2}$ joining the fixed points of $B$ in $\partial\mathbb{H}^{2}$. The transformation $B$ fixes $\bar{\alpha}_{B}$ and acts on it as a translation. In particular, if a subgroup $\Gamma\subset \SL_{2}(\mathbb{R})$ acts properly discontinuously and freely on $\mathbb{H}^{2}$, every hyperbolic element $B\in\Gamma$ determines a non-contractible closed geodesic $\alpha$ on the Riemann surface $\mathbb{H}^{2}/\Gamma$, whose length is equal to the translation length $\ell_{B}$ of $B$. Reciprocally, a closed geodesic $\alpha$ in $\mathbb{H}^{2}/\Gamma$ lifts to a geodesic $\bar{\alpha}_{B}$ in $\mathbb{H}^{2}$ fixed by a hyperbolic matrix $B\in\Gamma$.\\

On the other hand, since $B$ is hyperbolic, $B$ is conjugate to a matrix of the form 
$$\begin{pmatrix}
\lambda & 0 \\
0 & \lambda^{-1}	
\end{pmatrix},$$ 
where $\lvert\lambda\rvert=e^{\frac{\ell_{B}}{2}}$. Hence  $2\cosh(\frac{\ell_{B}}{2})=\lvert \tr(B)\rvert$ and for any $z\in\mathbb{H}^{2}$ we have

\begin{equation}\label{distance and trace}
d_{\mathbb{H}^{2}}(z, Bz)\geq 2\log(\lvert \tr(B)\rvert-1)>0. \hspace{3mm} 
\end{equation}
We refer to \cite[Chapter 7]{Beardon} for further details about the geometry of the isometries of the hyperbolic plane $\mathbb{H}^{2}$.\\

The action of $\SL_{2}(\mathbb{R})$ on $\mathbb{H}^{2}$ extends to an action of the $n$-fold product $(\SL_{2}(\mathbb{R}))^{n}$ on the $n$-fold product $(\mathbb{H}^{2})^{n}$ of $n$ hyperbolic planes in a natural way: If $z=(z_{1},\ldots,z_{n})\in (\mathbb{H}^{2})^{n}$ and $B=(B_{1},\ldots,B_{n})\in~(\SL_{2}(\mathbb{R}))^{n}$, 
$$Bz:=(B_{1}z_{1},\ldots,B_{n}z_{n}),$$

where the action in every factor is the action by fractional linear transformations.\\

Let us recall the definition of a Hilbert modular variety (see \cite{Freitag}). Let $K$ be a totally real number field of degree $n$, $\mathcal{O}$ the ring of integers of $K$ and $\sigma_{1},\ldots,\sigma_{n}$ denote the $n$ embeddings of $K$ into the real numbers $\mathbb{R}$. The group $\SL_{2}(\mathcal{O})$ becomes an  arithmetic non-cocompact irreducible lattice of the semi-simple Lie group $G=(\SL_{2}(\mathbb{R}))^{n}$ via the map $\Delta(B)=(\sigma_{1}(B),\ldots,\sigma_{n}(B)),$ where $\sigma_{i}(B)$ denotes the matrix obtained from the matrix $B$ applying $\sigma_{i}$ to the entries of $B$ (see \cite[Proposition 5.5.8]{Mor15}).
Via this embedding, $\SL_{2}(\mathcal{O})$ acts with finite covolume  on the $n$-fold 	product of hyperbolic planes $(\mathbb{H}^{2})^{n}$. The quotient $M=(\mathbb{H}^{2})^{n}/\SL_{2}(\mathcal{O})$ is called a \textit{Hilbert modular variety} and the group $\SL_{2}(\mathcal{O})$ is called a \textit{Hilbert modular group}.

\subsection{Congruence coverings of $M$}\label{Congruence subgroups of G}

If $I\subset\mathcal{O}$ is an ideal, the natural projection $\mathcal{O}\rightarrow \mathcal{O}/I$ induces a group homomorphism

$$\SL_{2}(\mathcal{O}) \xrightarrow{\pi_{I}} \SL_{2}(\mathcal{O}/I).$$

Let us denote by $\Gamma(I):=ker(\pi_{I})$ the \textit{principal congruence subgroup of $\Gamma$} associated to $I$. Since $\mathcal{O}/I$  is finite, $\Gamma(I)$ is a finite-index subgroup of $\Gamma$ for any ideal $I$ of $\mathcal{O}$. We associate to $\Gamma(I)$ a \textit{congruence cover} $M_{I}=(\mathbb{H}^{2})^{n}/\Gamma(I)\rightarrow M$. Note that $\SL_{2}(\mathcal{O})$ is an irreducible lattice in $(\SL_{2}(\mathbb{R}))^{n}$ and so the varieties $M$ and $M_{I}$ do not split into products. We remark that $M$ has quotient singularities, so the covering $M_{I}\rightarrow M$ should be interpreted in the orbifold sense. For large enough $I$ the varieties $M_{I}$ are manifolds (see Corollary \ref{riemannian structure}). \\

This construction is a particular case of a more general situation: If $G$ is a semi-simple Lie group, a discrete subgroup $\Gamma\subset G$ is called arithmetic if there exists a number field $k$, a
$k$-algebraic group H, and a surjective continuous homomorphism 
$\varphi: \mbox{H}(k \otimes_{\mathbb{Q}}\mathbb{R}) \rightarrow G$ with
compact kernel such that $\varphi(\mbox{H}(\mathcal{O}_{k}))$ is commensurable to $\Gamma$, where $\mbox{H}(\mathcal{O}_{k})$
denote the $\mathcal{O}_{k}$-points of H with respect to some fixed embedding of H into $\GL_{m}$. For any ideal $I\subset\mathcal{O}_{k}$ the principal congruence subgroup of $\mbox{H}(O_{k})$ associated to $I$ is defined by 
$$\mbox{H}(I):=ker( \mbox{H}(\mathcal{O}_{k})\xrightarrow{\pi_{I}} \mbox{H}(\mathcal{O}_{k}/{I}))$$
where $\pi_{I}$ is the reduction map modulo $I$. Any discrete subgroup of $G$ containing some of these subgroups $\mbox{H}(I)$ is called a \textit{congruence subgroup of G}.\\

By Margulis Arithmeticity Theorem (see \cite[Chapter 5]{Mor15} and the references therein), for $n\geq 2$ any irreducible lattice in $(\SL_{2}(\mathbb{R}))^{n}$ is  arithmetic. A Serre's conjecture, proved to be true in the non-uniform case, shows that any non-uniform lattice of $(\SL_{2}(\mathbb{R}))^{n}$ is a congruence subgroup. \\

The coverings $M_{I}\rightarrow M$ are regular coverings because the subgroups $\Gamma(I)$ are normal subgroups of $\Gamma$. It is worth to note that in a sequence of non-regular congruence coverings of an arithmetic manifold the systole could grow slower than logarithmically with respect to the volume (\textit{cf}~\cite[Section 4.1]{LG14}). 

\subsection{Upper bound for the systole growth of $M_{I}$}\label{closedgeodesics}

The problem of determining if a manifold has a closed geodesic is in general very difficult. However, in the case of manifolds being quotients of hyperbolic spaces it appears to be easier to understand.\\

As was explained above, if $\Gamma$ is any discrete group of isometries of $\mathbb{H}^{2}$ acting freely on $\mathbb{H}^{2}$, every hyperbolic element $\gamma\in\Gamma$ produces a non-contractible closed geodesic on $\mathbb{H}^{2}/\Gamma$. We can use this idea to see that the quotients $M_ {I}$ which we are interested in have closed geodesics.\\

Suppose $I\subset\mathcal{O}$ is an ideal with $\N(I)>2$ and such that $M_{I}$ is a Riemannian manifold (see Corollary \ref{riemannian structure}). The norm $\N(I)$ is a rational integer with $\N(I)\in I$, so if we take the matrix
$$B= \left(
\begin{array}{cc}
1-\N(I)^{2}&  \N(I) \\
-\N(I) & 1 \
\end{array}
\right),$$ 
then $B\in \Gamma(I)$ and $\lvert \tr(\sigma_{i}(B))\rvert > 2$ for any $i=1,\ldots,n$. This means that the matrices $\sigma_{1}(B)=\sigma_{2}(B)=\cdots=\sigma_{n}(B)$ are hyperbolic and if we take $\bar{\alpha}$ being the only geodesic in $\mathbb{H}^{2}$ fixed by $B$, the curve $\bar{\beta}=\bar{\alpha}\times\cdots\times\bar{\alpha}$ is a geodesic in $(\mathbb{H}^{2})^{n}$ fixed by $(\sigma_{1}(B),\ldots,\sigma_{n}(B))$, and $\bar{\beta}$ project to a non-contractible closed  geodesic $\beta$ in $M_{I}$. Note that this geodesic might not be the shortest one, so $\sys\pi_{1}({M_{I}})\leq \ell(\beta)=\sqrt{n}\ell_{B}$, where $\ell_{B}$ denotes the translation length of $B$ along $\bar{\alpha}$.\\

We know that $2\cosh(\frac{\ell_{B}}{2})=\lvert \tr(B)\rvert=\N(I)^{2}-2 < \N(I)^{2},$ and so 

\begin{equation*}
\sys\pi_{1}({M_{I}})\leq 4\sqrt{n}\log(\N(I)).
\end{equation*}

Now let $r$ be the minimum of the positive rational integers in the ideal $I$. Then $\N(I)^{\frac{1}{n}}\leq r \leq \N(I)$ because $\N(I)\in I$ and $r^{n}=\N(r)\geq \N(I)$. For any distinct positive rational integers $m,n\in\lbrace 1,\ldots, r\rbrace$ the matrices  
\[
 \left(
\begin{array}{cc}
1-m^{2}& m \\
-m & 1 \
\end{array}
\right)\quad
\mbox{and}\quad
\left(
\begin{array}{cc}
1-n^{2}& n \\
-n & 1 \
\end{array}
\right)
\]

are not equivalent in the quotient $\SL_{2}(\mathcal{O})/\Gamma (I)$, hence $[\Gamma:\Gamma(I)]\geq r \geq \N(I)^{\frac{1}{n}}$. With all this we conclude that 
$$\sys\pi_{1}({M_{I}})\leq 4n^{\frac{3}{2}}\log([\Gamma:\Gamma(I)]).$$

This proves inequality \ref{upperbound}.

\section{\textbf{Distance estimate for congruence subgroups.}}\label{mainpart}

In this section we will prove that the congruence subgroups $\Gamma(I)$ act freely on $(\mathbb{H}^{2})^{n}$ when the norm of the ideal $I$ is big enough and we will relate the length of closed geodesics in $M_{I}$ to the norm of the ideal $I$. The first fact follows from Selberg's Lemma \cite[§4.8]{Mor15}  but in our case the proof gives an explicit bound in terms of the norm of $I$. Some of the ideas are inspired by \cite{KSV07} where the authors study the systoles of compact arithmetic hyperbolic surfaces and 3-manifolds.\\

We would like to make some comments about the notation we will use in this section. Sometimes we are going to use the notation $A$ or \\ $(\sigma_{1}(A), \ldots,\sigma_{n}(A))$ indifferently for the same element in $\SL_{2}(\mathcal{O})$ or its image in $(\SL_{2}(\mathbb{R}))^{n}$ via the map $\Delta$ defined in Section \ref{preliminaries}.

Take 
$A=\left(
\begin{array}{cc}
a & b \\
c & d \
\end{array}
\right)\in \SL_{2}(\mathcal{O})$. For our purpose it is convenient to express $A$ in the form $$A=\left(
\begin{array}{cc}
x_{0}+x_{1} & x_{2}+x_{3} \\
x_{2}-x_{3} & x_{0}-x_{1} \
\end{array}
\right),$$  
where $x_{0}=\frac{a+d}{2}, x_{1}=\frac{a-d}{2}, x_{2}=\frac{b+c}{2}$ and $x_{3}=\frac{b-c}{2}$ are elements of the field $K$. We have $x_{0}^{2}-x_{1}^{2}-x_{2}^{2}+x_{3}^{2}=1$ and we write  $y_{0}=x_{0}-1$. \\

With these notations, if $I\subset\mathcal{O}$ is an ideal and $A\in\Gamma(I)$ then $2x_{0}-2\in I$ and $2x_{i}\in I$ for $i=1,2,3$. In terms of fractional ideals it means that $y_{0}, x_{1}, x_{2}$ and $x_{3}$ lie in $\frac{I}{2}$.

\begin{lemma} \label{firstlemma}
If $A\in\Gamma(I)$, then $y_{0}\in\frac{I^{2}}{8}$. In particular, if $y_{0}\neq 0$ then $\lvert\N(y_{0})\rvert\geq\frac{1}{8^{n}}\N(I)^{2}$.  
\end{lemma}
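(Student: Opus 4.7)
The plan is to exploit the quadratic relation coming from $\det(A)=1$ together with the bookkeeping already done in the excerpt, namely that $y_{0},x_{1},x_{2},x_{3}\in \tfrac{I}{2}$ whenever $A\in\Gamma(I)$.

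First I would rewrite the determinant condition $x_{0}^{2}-x_{1}^{2}-x_{2}^{2}+x_{3}^{2}=1$ in terms of $y_{0}=x_{0}-1$. Substituting $x_{0}=1+y_{0}$ and expanding gives
\begin{equation*}
y_{0}^{2}+2y_{0}=x_{1}^{2}+x_{2}^{2}-x_{3}^{2},
\end{equation*}
or equivalently
\begin{equation*}
2y_{0}=x_{1}^{2}+x_{2}^{2}-x_{3}^{2}-y_{0}^{2}.
\end{equation*}

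Next I would feed the ideal information into this identity. Since $A\in\Gamma(I)$, each of $y_{0},x_{1},x_{2},x_{3}$ lies in the fractional ideal $\tfrac{I}{2}$, so each of the four squares on the right-hand side lies in $\tfrac{I^{2}}{4}$. Because $\tfrac{I^{2}}{4}$ is closed under addition and subtraction, it follows that $2y_{0}\in\tfrac{I^{2}}{4}$, and therefore $y_{0}\in\tfrac{I^{2}}{8}$, as claimed.

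For the norm estimate, assume $y_{0}\neq 0$ and consider the principal fractional ideal $(y_{0})=y_{0}\mathcal{O}$. The inclusion $y_{0}\in\tfrac{I^{2}}{8}$ gives $(y_{0})\subseteq \tfrac{I^{2}}{8}$, so the quotient $\tfrac{I^{2}}{8}\cdot (y_{0})^{-1}$ is an integral ideal of $\mathcal{O}$, hence has positive integer norm. Taking ideal norms (which are multiplicative and satisfy $\N((\alpha))=|\N_{K/\mathbb{Q}}(\alpha)|$ and $\N((8))=8^{n}$) yields
\begin{equation*}
|\N(y_{0})|=\N((y_{0}))\geq \N\!\left(\tfrac{I^{2}}{8}\right)=\frac{\N(I)^{2}}{8^{n}},
\end{equation*}
which is the asserted lower bound. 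There is no real obstacle here; the only point that requires a small comment is that $y_{0}$ need not be an algebraic integer, so one must phrase the final step in the language of fractional ideals rather than integer norms directly.
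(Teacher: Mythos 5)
Your argument is correct and is essentially the paper's own: substitute $x_{0}=1+y_{0}$ into $x_{0}^{2}-x_{1}^{2}-x_{2}^{2}+x_{3}^{2}=1$, use $y_{0},x_{1},x_{2},x_{3}\in\frac{I}{2}$ to get $2y_{0}\in\frac{I^{2}}{4}$, hence $y_{0}\in\frac{I^{2}}{8}$, and then pass to fractional ideal norms for the second claim (which the paper leaves implicit). One small slip in the norm step: from $(y_{0})\subseteq\frac{I^{2}}{8}$ the integral ideal is $(y_{0})\cdot\bigl(\frac{I^{2}}{8}\bigr)^{-1}$, not $\frac{I^{2}}{8}\cdot(y_{0})^{-1}$ as you wrote (the latter would require the reverse containment and would give the opposite inequality); with that corrected, your final bound $|\N(y_{0})|\geq\N(I)^{2}/8^{n}$ follows exactly as you state.
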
 

\begin{proof}
We know that $A\in\Gamma(I)$ implies $x_{0}-1, x_{1},x_{2},x_{3}\in \frac{I}{2}$. Now replacing $x_{0}=1+y_{0}$ in the equation $x_{0}^{2}-x_{1}^{2}-x_{2}^{2}+x_{3}^{2}=1$ we obtain

 $$2y_{0}=-y_{0}^{2}+x_{1}^{2}+x_{2}^{2}-x_{3}^{2}\in \frac{I^{2}}{4}.$$

Hence $y_{0}\in\frac{I^{2}}{8}$.
\end{proof}

\begin{lemma}\label{secondlemma}
If $A\in\Gamma(I)$ with $y_{0}\neq 0$ then $|\tr(\sigma_{j}(A))|\geq \frac{\left(\N(I)\right)}{4}^{\frac{2}{n}}-2$ for some $j\in\{1,\dots,n\}.$
\end{lemma}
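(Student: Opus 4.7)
The plan is to convert the norm lower bound on $y_0$ from Lemma \ref{firstlemma} into a pointwise lower bound on at least one embedding $\sigma_j(y_0)$, and then translate this into a lower bound on the corresponding trace.

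First I would write the trace explicitly in terms of the parameter $y_0$. Since $\tr(A) = 2x_0 = 2 + 2y_0$, applying the embedding $\sigma_j$ gives
\[
\tr(\sigma_j(A)) \;=\; 2 + 2\sigma_j(y_0),
\]
so by the reverse triangle inequality
\[
|\tr(\sigma_j(A))| \;\geq\; 2|\sigma_j(y_0)| - 2.
\]
It therefore suffices to find an index $j$ for which $|\sigma_j(y_0)|$ is at least $\N(I)^{2/n}/8$.

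Next I would exploit the identity $|\N(y_0)| = \prod_{i=1}^{n} |\sigma_i(y_0)|$ together with the hypothesis $y_0\neq 0$. Since the product of $n$ nonnegative numbers is at most the $n$-th power of the largest one, there exists an index $j\in\{1,\dots,n\}$ with
\[
|\sigma_j(y_0)|^{n} \;\geq\; |\N(y_0)|.
\]
By Lemma \ref{firstlemma}, $|\N(y_0)| \geq \N(I)^{2}/8^{n}$, so extracting the $n$-th root yields
\[
|\sigma_j(y_0)| \;\geq\; \frac{\N(I)^{2/n}}{8}.
\]

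Finally I would plug this bound into the trace inequality from the first step to conclude
\[
|\tr(\sigma_j(A))| \;\geq\; 2\cdot\frac{\N(I)^{2/n}}{8} - 2 \;=\; \frac{\N(I)^{2/n}}{4} - 2,
\]
which is exactly the desired estimate. There is no genuine obstacle here; the proof is a short combination of Lemma \ref{firstlemma} with the multiplicativity of the norm, and the only thing to be careful about is distributing the $8^n$ correctly when taking the $n$-th root so that the constant $1/4$ (rather than $1/8$) appears after the factor of $2$ from the trace formula.
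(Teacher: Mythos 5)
Your argument is correct and coincides with the paper's own proof: both use Lemma \ref{firstlemma} together with the multiplicativity of the norm to extract an index $j$ with $|\sigma_j(y_0)|\geq \N(I)^{2/n}/8$, and then bound $|\tr(\sigma_j(A))|=|2\sigma_j(y_0)+2|$ from below via the triangle inequality. No issues.
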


\begin{proof}
By the definition, $\N(y_{0})=\prod_{j=1}^{n}\sigma_{j}(y_{0})$, so by Lemma \ref{firstlemma}, for some $j\in\{1,\ldots,n\}$, we have $|\sigma_{j}(y_{0})|\geq  \frac{\N(I)^\frac{2}{n}}{8}$. Therefore\\ $|\tr(\sigma_{j}(A))|=|2\sigma_{j}(x_{0})|=|2\sigma_{j}(y_{0})+2|\geq \frac{\N(I)^{\frac{2}{n}}}{4}-2.$
\end{proof}

\vspace{-0.6mm}
With this we can guarantee the Riemannian structure for $M_{I}$:  
\begin{corollary}\label{riemannian structure}
For any ideal $I\subset\mathcal{O}$ with $\N(I)\geq 4^{n}$ the subgroup $\Gamma(I)$ acts on $(\mathbb{H}^{2})^{n}$ freely and so $M_{I}=(\mathbb{H}^{2})^{n}/\Gamma(I)$ admits a structure of a Riemmanian manifold with non-positive sectional curvature.
\end{corollary}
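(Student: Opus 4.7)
The plan is to show that every non-identity element of $\Gamma(I)$ acts on $(\mathbb{H}^{2})^{n}$ without fixed points; combined with the fact that $\Gamma(I)$ is discrete (hence acts properly discontinuously), this endows $M_{I}$ with a Riemannian manifold structure, and the sectional curvature is non-positive because the metric is inherited locally from the product of hyperbolic planes $(\mathbb{H}^{2})^{n}$.

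First I would record the standard fact that $A = (\sigma_{1}(A),\dots,\sigma_{n}(A)) \in (\SL_{2}(\mathbb{R}))^{n}$ fixes a point of $(\mathbb{H}^{2})^{n}$ if and only if every factor $\sigma_{j}(A)$ fixes a point of $\mathbb{H}^{2}$; this happens precisely when $\sigma_{j}(A) = \pm I$ or $\sigma_{j}(A)$ is elliptic, i.e. $|\tr(\sigma_{j}(A))| < 2$. So it suffices to exhibit, for each $A \in \Gamma(I) \setminus \{I\}$, an index $j$ with $\sigma_{j}(A) \neq \pm I$ and $|\tr(\sigma_{j}(A))| \geq 2$.

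For $A$ with $y_{0} \neq 0$, Lemma \ref{secondlemma} immediately delivers an index $j$ for which $|\tr(\sigma_{j}(A))| \geq \N(I)^{2/n}/4 - 2$; the hypothesis $\N(I) \geq 4^{n}$ then makes this $\geq 2$. The same lemma also ensures $\sigma_{j}(y_{0}) \neq 0$, which rules out $\sigma_{j}(A) = I$, while $\sigma_{j}(A) = -I$ would force $A = -I$ by the injectivity of the embedding $\sigma_{j}$; but $-I \in \Gamma(I)$ is equivalent to $2 \in I$, which would imply $\N(I) \mid 2^{n}$, contradicting $\N(I) \geq 4^{n} > 2^{n}$. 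The remaining case $y_{0} = 0$ means $\tr(A) = 2$ with $A \neq I$: then $A - I$ is a nonzero nilpotent matrix, so each $\sigma_{j}(A)$ is a non-identity unipotent element of $\SL_{2}(\mathbb{R})$, hence parabolic, and its unique fixed point lies on $\partial \mathbb{H}^{2}$.

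The only real delicacy is the boundary case $|\tr(\sigma_{j}(A))| = 2$ allowed by Lemma \ref{secondlemma}, where a priori one might have $\sigma_{j}(A) = \pm I$; both possibilities are disposed of as above using injectivity of $\sigma_{j}$ together with $\N(I) \geq 4^{n} > 2^{n}$. Once freeness is established, the manifold structure and non-positivity of curvature are automatic from the analogous properties of $(\mathbb{H}^{2})^{n}$.
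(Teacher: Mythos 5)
Your proof is correct and follows essentially the same route as the paper: reduce freeness to showing that for every non-identity $A\in\Gamma(I)$ some factor $\sigma_{j}(A)$ has no fixed point in $\mathbb{H}^{2}$, with Lemma \ref{secondlemma} supplying the trace bound $\lvert\tr(\sigma_{j}(A))\rvert\geq \N(I)^{2/n}/4-2\geq 2$ when $\N(I)\geq 4^{n}$. The only difference is that you carefully dispose of the degenerate cases ($y_{0}=0$ giving unipotent/parabolic elements, $\sigma_{j}(A)=\pm I$, and the boundary value $\lvert\tr\rvert=2$), which the paper's one-line argument ("fixed point iff $\lvert\tr\rvert<2$") glosses over; this is a welcome tightening rather than a different method.
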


\begin{proof}
The element $A=(\sigma_{1}(A),\ldots,\sigma_{n}(A))\in\Gamma(I)$ has a fixed point on $(\mathbb{H}^{2})^{n}$ if and only if $\sigma_{i}(A)$ has a fixed point in $\mathbb{H}^{2}$ for any $i=1,\ldots.n$, but this happens if only if $\lvert \tr(\sigma_{i}(A))\rvert<2$ which, by Lemma \ref{secondlemma}, is impossible if $\N(I)\geq 4^{n}$.
\end{proof}
Now observe that for $i=1,\ldots,n$ and $A\in\Gamma$, 
\begin{equation}\label{inequalitytrace}
2\lvert  \sigma_{i}(y_{0})\rvert-2 \leq\lvert \tr(\sigma_{i}(A))\rvert\leq 2+2\lvert  \sigma_{i}(y_{0})\rvert.
\end{equation}

\begin{proposition}\label{mainproposition}
Let $I\subset\mathcal{O}$ be an ideal with $\N(I)\geq 40^{\frac{n}{2}}$ and $A\in\Gamma(I)$ with $y_{0}\neq 0$. Then for any point $z=(z_{1},\ldots,z_{n})\in (\mathbb{H}^{2})^{n}$ we have

\begin{equation*}
d_{(\mathbb{H}^{2})^{n}}(z,Az)\geq \frac{4}{\sqrt{n}}\log(\N(I))-2\sqrt{n}\log(40).
\end{equation*}
\end{proposition}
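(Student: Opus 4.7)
The plan is to pass from the product distance in $(\mathbb{H}^{2})^{n}$ to a sum of one-dimensional displacements, and then bound each such displacement below using the arithmetic control on $y_{0}$ provided by Lemma~\ref{firstlemma}.

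First, with respect to the product Riemannian structure,
$$d_{(\mathbb{H}^{2})^{n}}(z,Az)^{2}=\sum_{j=1}^{n}d_{\mathbb{H}^{2}}(z_{j},\sigma_{j}(A)z_{j})^{2},$$
so Cauchy--Schwarz gives
$$d_{(\mathbb{H}^{2})^{n}}(z,Az)\geq \frac{1}{\sqrt{n}}\sum_{j=1}^{n}d_{\mathbb{H}^{2}}(z_{j},\sigma_{j}(A)z_{j}).$$
It therefore suffices to show that the right-hand sum is at least $4\log\N(I)-2n\log 40$.

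Next, I partition the indices according to the size of $\sigma_{j}(y_{0})$: let $S_{1}=\{j:|\sigma_{j}(y_{0})|\geq 5\}$ and $S_{2}=\{1,\ldots,n\}\setminus S_{1}$. For $j\in S_{1}$, inequality~\eqref{inequalitytrace} yields $|\tr(\sigma_{j}(A))|\geq 2|\sigma_{j}(y_{0})|-2\geq 8$, so $\sigma_{j}(A)$ is hyperbolic; combining inequality~\eqref{distance and trace} with the elementary fact $2x-3\geq x$ for $x\geq 3$ gives
$$d_{\mathbb{H}^{2}}(z_{j},\sigma_{j}(A)z_{j})\geq 2\log(|\tr(\sigma_{j}(A))|-1)\geq 2\log(2|\sigma_{j}(y_{0})|-3)\geq 2\log|\sigma_{j}(y_{0})|.$$
For $j\in S_{2}$ I simply use the trivial bound $d_{\mathbb{H}^{2}}(z_{j},\sigma_{j}(A)z_{j})\geq 0$.

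Finally, I combine these per-factor bounds with Lemma~\ref{firstlemma}. Since $|\N(y_{0})|=\prod_{j}|\sigma_{j}(y_{0})|\geq \N(I)^{2}/8^{n}$, one has $\sum_{j}\log|\sigma_{j}(y_{0})|\geq 2\log\N(I)-n\log 8$. As each summand with $j\in S_{2}$ is at most $\log 5$ and $|S_{2}|\leq n$,
$$\sum_{j\in S_{1}}\log|\sigma_{j}(y_{0})|\geq 2\log\N(I)-n\log 8-n\log 5=2\log\N(I)-n\log 40.$$
Doubling and inserting into the Cauchy--Schwarz step produces the stated inequality; the hypothesis $\N(I)\geq 40^{n/2}$ merely ensures that the right-hand side is non-negative.

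The only genuine obstacle is that factors with small $|\sigma_{j}(y_{0})|$ can fail to be hyperbolic and hence contribute nothing to the displacement. The key point is that Lemma~\ref{firstlemma} controls the \emph{product} of the $|\sigma_{j}(y_{0})|$ from below, so a log-averaging argument transfers enough mass to the ``good'' hyperbolic factors, at the cost of a uniform additive loss $n\log 40$ from the $\log 8$ in Lemma~\ref{firstlemma} and the cutoff $|\sigma_{j}(y_{0})|<5$ on $S_{2}$.
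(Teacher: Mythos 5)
Your proof is correct and follows essentially the same route as the paper: the same good/bad partition of the embeddings (your cutoff $|\sigma_{j}(y_{0})|\geq 5$ corresponds via \eqref{inequalitytrace} to the paper's cutoff $|\tr(\sigma_{j}(A))|\geq 8$), the same use of Lemma \ref{firstlemma}, inequalities \eqref{distance and trace} and \eqref{inequalitytrace}, and Cauchy--Schwarz, with the same constant $40=5\cdot 8$. The only difference is organizational: you apply Cauchy--Schwarz over all $n$ coordinates and give the bad factors the trivial bound $0$, whereas the paper splits into two cases and uses $\sqrt{n-k}\leq\sqrt{n}$ on the good factors alone; this is a mild streamlining (and correctly shows the hypothesis $\N(I)\geq 40^{n/2}$ is only needed for the bound to be nonnegative), not a different argument.
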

\begin{proof}
By Lemma \ref{secondlemma}, $|\tr(\sigma_{j}(A))|\geq 8$ for some $j\in \{1,\dots,n\}$, hence we can subdivide our analysis into two different cases:\\

\textit{Case 1}: $|\tr(\sigma_{i}(A))|\geq 8$ for any $i=1,\ldots,n.$\\
In this case all of the matrices $\sigma_{i}(A)$ are hyperbolic and the right hand side of Equation \eqref{inequalitytrace} implies that $|\sigma_{i}(y_{0})|\geq 3$ for $i=1,\ldots,n$.\\

Using Equation \eqref{distance and trace}, the left hand side of Equation \eqref{inequalitytrace}, the fact that $|\sigma_{i}(y_{0})|\geq 3$ for $i=1,\ldots,n$, the convexity  of the function $x^{2}$ and Lemma \ref{firstlemma} we obtain

\begin{align*}
d_{(\mathbb{H}^{2})^{n}}(z,Az)&=\sqrt{d_{\mathbb{H}^{2}}^{2}(z_{1},\sigma_{1}(A)z_{1})+\cdots+d_{\mathbb{H}^{2}}^{2}(z_{n},\sigma_{n}(A)z_{n}))}\\
& \geq2\sqrt{\log^{2}(\lvert \tr(\sigma_{1}(A))\rvert-1)+\cdots+\log^{2}(\lvert \tr(\sigma_{n}(A))\rvert-1)}\\
& \geq 2\sqrt{\log^{2}(2\lvert \sigma_{1}(y_{0})\rvert-3)+\cdots+\log^{2}(2\lvert \sigma_{n}(y_{0})\rvert-3)}\\
&\geq 2\sqrt{\log^{2}(\lvert \sigma_{1}( y_{0})\rvert)+\cdots+\log^{2}(\lvert \sigma_{n}(y_{0})\rvert)}\\
&\geq\frac{2}{\sqrt{n}}\left(\log(\lvert \sigma_{1}(y_{0})\rvert)+\cdots+ \log(\lvert \sigma_{n}(y_{0})\rvert)\right)\\
&=\frac{2}{\sqrt{n}}\log(\lvert\N(y_{0})\rvert)\\
&\geq \frac{4}{\sqrt{n}}\log(\N(I))-2\sqrt{n}\log(8).
\end{align*}

\textit{Case 2}: There are exactly $k<n$ of the indices $1,\ldots,n$ such that\\ $|\tr(\sigma_{j}(A))|< 8$.\\
Without loss of generality we assume that $|\tr(\sigma_{j}(A))|< 8$ for $j=~1,\ldots,k$. By the left hand side of Equation \eqref{inequalitytrace}, $|\sigma_{j}(y_{0})|< 5$ for any of these $j's$ and by Lemma \ref{firstlemma} we have

 $$\prod_{i=k+1}^{n}\lvert\sigma_{i}(y_{0})\rvert=\frac{\lvert \N(y_{0})\rvert}{\prod_{i=1}^{k}|\sigma_{i}(y_{0})|}>\frac{1}{5^{n}.8^{n}}\N(I)^{2}.$$

Now, as $|\tr(\sigma_{i}(A))|\geq8$ for $i=k+1,\ldots,n$, for these indices $\sigma_{i}(A)$ is hyperbolic and $|\sigma_{i}(y_{0})|\geq3$ by the left hand side of Equation \eqref{inequalitytrace}. Using Equation \eqref{distance and trace} and the previous facts we have
  
\begin{align*}
d_{(\mathbb{H}^{2})^{n}}(z,Az)&=\sqrt{d_{\mathbb{H}^{2}}^{2}(z_{1},\sigma_{1}(A)z_{1})+\cdots +d_{\mathbb{H}^{2}}^{2}(z_{n},\sigma_{n}(A)z_{n}))}\\
&\geq \sqrt{d_{\mathbb{H}^{2}}^{2}(z_{k+1},\sigma_{k+1}(A)z_{k+1})+\cdots +d_{\mathbb{H}^{2}}^{2}(z_{n},\sigma_{n}(A)z_{n}))}\\
&\geq2\sqrt{\log^{2}(\lvert \tr(\sigma_{k+1}(A))\rvert-1)+\cdots+\log^{2}(\lvert \tr(\sigma_{n}(A))\rvert-1)}\\
& \geq 2\sqrt{\log^{2}(2\lvert \sigma_{k+1}(y_{0})\rvert-3)+\cdots+\log^{2}(2\lvert \sigma_{n}(y_{0})\rvert-3)}\\
&\geq 2\sqrt{\log^{2}(\lvert \sigma_{k+1}( y_{0})\rvert)+\cdots+\log^{2}(\lvert \sigma_{n}(y_{0})\rvert)}\\
&\geq\frac{2}{\sqrt{n-k}}\left(\log(\lvert \sigma_{k+1}(y_{0})\rvert)+\cdots+ \log(\lvert \sigma_{n}(y_{0})\rvert)\right)\\
&=\frac{2}{\sqrt{n-k}}\log\left(\prod_{i=k+1}^{n}\lvert\sigma_{i}(y_{0})\rvert\right)\\
&\geq \frac{4}{\sqrt{n}}\log(\N(I))-2\sqrt{n}\log(40).
\end{align*}
In all the cases we have 
$$d_{(\mathbb{H}^{2})^{n}}(z,Az)\geq \frac{4}{\sqrt{n}}\log(\N(I))-2\sqrt{n}\log(40).$$
\end{proof}

\begin{corollary}\label{maincorollary}
For any ideal $I\subset\mathcal{O}$ with $\N(I)\geq 40^{\frac{n}{2}}$, the length of any non-contractible closed geodesic $\alpha$ in $M_{I}$ satisfies
$$\ell(\alpha)\geq \frac{4}{\sqrt{n}}\log(\N(I))-2\sqrt{n}\log(40).$$

\end{corollary}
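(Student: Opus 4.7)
The plan is to deduce the corollary from Proposition \ref{mainproposition} by lifting the closed geodesic to the universal cover, identifying the deck transformation that translates the lift, and verifying that this transformation satisfies the non-degeneracy hypothesis $y_{0}\neq 0$ needed to invoke the proposition.

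First, since $\N(I)\geq 40^{n/2}>4^{n}$, Corollary \ref{riemannian structure} guarantees that $\Gamma(I)$ acts freely on $(\mathbb{H}^{2})^{n}$, so $M_{I}$ is a genuine Riemannian manifold. A non-contractible closed geodesic $\alpha$ in $M_{I}$ lifts to a complete geodesic $\bar{\alpha}\subset(\mathbb{H}^{2})^{n}$ and, by standard covering-space theory, there is a deck transformation $A\in\Gamma(I)$ that translates $\bar{\alpha}$ along itself by exactly $\ell(\alpha)$. In particular $\bar{\alpha}$ is an axis of $A$ and for every $z\in\bar{\alpha}$ one has $d_{(\mathbb{H}^{2})^{n}}(z,Az)=\ell(\alpha)$. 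Since the central elements $\pm I$ act trivially on $(\mathbb{H}^{2})^{n}$ and cannot produce closed geodesics, $A\neq\pm I$.

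The key step, and the main obstacle, is to verify that $y_{0}(A)\neq 0$ for this specific $A$. If instead $y_{0}=0$ then $\sigma_{i}(x_{0})=1$ and $|\tr(\sigma_{i}(A))|=2$ for every embedding $\sigma_{i}$, so each factor $\sigma_{i}(A)$ is either $\pm I$ or parabolic. Because each $\sigma_{i}\colon K\to\mathbb{R}$ is injective, the equality $\sigma_{i}(A)=\pm I$ for a single index $i$ already forces all four entries of $A$ to their corresponding integer values, hence $A=\pm I$, which we have excluded. Therefore at least one $\sigma_{i}(A)$ would be parabolic. Using the product-metric identity $d_{(\mathbb{H}^{2})^{n}}^{2}(z,Az)=\sum_{i}d_{\mathbb{H}^{2}}^{2}(z_{i},\sigma_{i}(A)z_{i})$, the infimum of the displacement is a sum of factor infima, and a parabolic factor contributes an infimum of $0$ that is \emph{not} attained. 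Hence the infimum of the displacement of $A$ would not be attained on $(\mathbb{H}^{2})^{n}$, contradicting the existence of the axis $\bar{\alpha}$ on which $\ell(\alpha)>0$ is realized. Thus $y_{0}\neq 0$.

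With the hypothesis verified, Proposition \ref{mainproposition} applies directly to $A$ and to any point $z\in\bar{\alpha}$, giving
\[
\ell(\alpha)=d_{(\mathbb{H}^{2})^{n}}(z,Az)\geq \frac{4}{\sqrt{n}}\log(\N(I))-2\sqrt{n}\log(40),
\]
which is the claimed inequality. The only non-routine step is the exclusion of $y_{0}=0$; the rest is formal covering-space theory combined with the preceding proposition.
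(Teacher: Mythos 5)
Your proof is correct, and its overall skeleton is the same as the paper's: lift $\alpha$ to the universal cover, take the deck transformation $A\in\Gamma(I)$ translating the lift by $\ell(\alpha)$, check that $y_{0}\neq 0$, and quote Proposition \ref{mainproposition}. The only real divergence is how $y_{0}\neq 0$ is verified. The paper argues positively: since $\alpha$ is non-contractible, some component $\tilde{\alpha}_{i}$ of the lift is a genuine geodesic of $\mathbb{H}^{2}$ translated by $\sigma_{i}(A)$, so $\sigma_{i}(A)$ is hyperbolic, hence $\tr(A)\neq\pm 2$ and $y_{0}\neq 0$. You argue by contradiction: $y_{0}=0$ forces $\tr(\sigma_{i}(A))=2$ for every $i$, injectivity of the embeddings rules out $\sigma_{i}(A)=\pm I$ (else $A=\pm I$, excluded because $A$ acts non-trivially), so some factor is parabolic, and a parabolic factor makes the displacement infimum of $A$ unattained, which is incompatible with the axis. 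This is valid, but note that the incompatibility rests on the standard Hadamard/CAT(0) fact that an isometry translating a geodesic attains its minimal displacement precisely on such axes (proved by projecting onto the convex axis, which is $A$-equivariant and distance non-increasing); you assert this rather than prove it, and the value $\ell(\alpha)$ realized on the axis is not \emph{a priori} the infimum without it. A more economical way to close your contradiction, in the spirit of the paper, is to observe directly that each $\sigma_{i}(A)$ must preserve the image of $\tilde{\alpha}_{i}$ (a point or a geodesic line), while a parabolic element fixes no point of $\mathbb{H}^{2}$ and preserves no geodesic. Both routes give the same inequality with the same constant $2\sqrt{n}\log(40)$.
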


\begin{proof}
By Corollary \ref{riemannian structure}, $M_{I}$ is a Riemannian manifold with the metric induced from $(\mathbb{H}^{2})^{n}$. Lifting $\alpha$ to a geodesic $\tilde{\alpha}=(\tilde{\alpha}_{1},\ldots,\tilde{\alpha}_{n})$ in its universal cover $(\mathbb{H}^{2})^{n}$ there is $A\in\Gamma(I)$ acting on $\tilde{\alpha}$ as a translation and for any $z$ in the graph of $\tilde{\alpha}$ we have $\ell(\alpha)=d_{(\mathbb{H}^{2})^{n}}((z,Az))$. Since $\alpha$ is non-contractible, $\tilde{\alpha}$ is not a point, then for some $i\in\lbrace 1,\ldots,n\rbrace$ $\tilde{\alpha}_{i}$ is a non trivial geodesic in $\mathbb{H}^{2}$, and so $\sigma_{i}(A)$ acts on it as a translation. This implies that $\sigma_{i}(A)$ is hyperbolic and, in particular, $\lvert \tr(A)\rvert\neq2$. Since $\lvert \tr(A)\rvert\neq 2$ implies $y_{0}\neq 0$, the result now follows from Proposition~\ref{mainproposition}.
\end{proof}

\section{\textbf{The index $[\Gamma:\Gamma(I)]$ in the prime power case.}}\label{primecase}

In order to relate the systole to the index $[\Gamma:\Gamma(I)]$ we need to estimate this index in terms of $\N(I)$. This result can be deduced from general theory, but we prefer to give a concrete analysis relevant to our problem.\\

If $I=\mathfrak{p}_{1}^{r_{1}}\ldots\mathfrak{p}_{s}^{r_{s}}$ is the decomposition of the ideal $I$ in prime ideals, by the Chinese Remainder Theorem

$$\SL_{2}(\mathcal{O}/I)\cong \SL_{2}(\mathcal{O}/\mathfrak{p}_{1}^{r_{1}})\times\cdots\times \SL_{2}(\mathcal{O}/\mathfrak{p}_{s}^{r_{s}}).$$

Therefore, the computation of $\lvert \SL_{2}(\mathcal{O}/I)\rvert$ reduces to the case $I=\mathfrak{p}^{t}$ where $\mathfrak{p}$ is a prime ideal and $t>0$.

\begin{lemma}
If $I=\mathfrak{p}^{t}$ is a prime power ideal of $\mathcal{O}$ then\\ $\lvert \SL_{2}(\mathcal{O}/I)\rvert< \N(\mathfrak{p})^{3t}.$ 
\end{lemma}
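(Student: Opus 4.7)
The plan is to argue by induction on $t$, cutting $\SL_2(\mathcal{O}/\mathfrak{p}^t)$ into slices by the reduction maps $\SL_2(\mathcal{O}/\mathfrak{p}^t) \to \SL_2(\mathcal{O}/\mathfrak{p}^{t-1})$ and bounding the size of each kernel. Throughout, write $q = \N(\mathfrak{p})$, so $\mathcal{O}/\mathfrak{p}$ is the finite field $\mathbb{F}_q$ and $|\mathfrak{p}^{i}/\mathfrak{p}^{i+1}| = q$ for every $i\geq 0$.

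For the base case $t=1$, the group $\SL_2(\mathcal{O}/\mathfrak{p})=\SL_2(\mathbb{F}_q)$ has the well-known order $q(q^2-1)=q^3-q$, which is already strictly less than $q^3 = \N(\mathfrak{p})$. This gives the conclusion when $t=1$ with room to spare, and this slack is exactly what will carry through the induction.

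For the inductive step, suppose $t\geq 2$ and consider the reduction-mod-$\mathfrak{p}^{t-1}$ homomorphism
$$\rho_t:\SL_2(\mathcal{O}/\mathfrak{p}^t)\longrightarrow \SL_2(\mathcal{O}/\mathfrak{p}^{t-1}).$$
An element in $\ker(\rho_t)$ has the form $I+N$ with $N\in M_2(\mathfrak{p}^{t-1}/\mathfrak{p}^t)$. The key observation is the determinant expansion
$$\det(I+N)=1+\tr(N)+\det(N),$$
and since each entry of $N$ lies in $\mathfrak{p}^{t-1}/\mathfrak{p}^t$, the product $\det(N)$ lies in $\mathfrak{p}^{2(t-1)}/\mathfrak{p}^t$, which is zero for $t\geq 2$. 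Therefore the condition $\det(I+N)=1$ reduces to the linear equation $\tr(N)=0$ in the four-dimensional $\mathbb{F}_q$-vector space $M_2(\mathfrak{p}^{t-1}/\mathfrak{p}^t)$, so $|\ker(\rho_t)|=q^3$. Combining with the inductive hypothesis (or, more precisely, with the exact order $(q^3-q)q^{3(t-2)}$ obtained iteratively) yields
$$|\SL_2(\mathcal{O}/\mathfrak{p}^t)|\leq q^3\cdot|\SL_2(\mathcal{O}/\mathfrak{p}^{t-1})|=(q^3-q)q^{3(t-1)}<q^{3t}=\N(\mathfrak{p})^{3t}.$$

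The only subtle point is the kernel computation in the inductive step, and in particular noticing that for $t\geq 2$ the $\det(N)$ term vanishes so that the quadratic determinant condition degenerates to the linear trace condition; this is what prevents the bound from degrading by an extra factor of $q$ per level and what preserves the strict inequality.
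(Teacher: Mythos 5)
Your argument is correct, but it follows a genuinely different route from the paper. The paper counts $\SL_{2}(\mathcal{O}/\mathfrak{p}^{t})$ directly over the local ring $\mathcal{O}/\mathfrak{p}^{t}$, splitting into the two cases where the entry $c$ is or is not a unit, and obtains the exact order $\N(\mathfrak{p})^{3t}\bigl(1-\N(\mathfrak{p})^{-2}\bigr)$ in one stroke; you instead filter by congruence level, reducing everything to the classical order of $\SL_{2}(\mathbb{F}_q)$ together with the computation $\lvert\ker\rho_{t}\rvert=q^{3}$, where the vanishing of $\det(N)$ for $t\geq 2$ turns the determinant condition into the linear trace condition. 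Your approach is the standard one for congruence filtrations and generalizes immediately (to $\SL_{m}$, or to any smooth group scheme over a local ring), and it isolates the structural fact that each congruence kernel is an elementary abelian group of order $q^{3}$; the paper's count is more elementary and self-contained and delivers the exact order without any induction. One small caveat: the equality $q^{3}\cdot\lvert\SL_{2}(\mathcal{O}/\mathfrak{p}^{t-1})\rvert=(q^{3}-q)q^{3(t-1)}$ in your final display tacitly assumes the reduction maps are surjective (true, by a Hensel-type lifting, but you do not prove it); this does not affect the lemma, since the inequality you actually need, $\lvert\SL_{2}(\mathcal{O}/\mathfrak{p}^{t})\rvert\leq\lvert\ker\rho_{t}\rvert\cdot\lvert\SL_{2}(\mathcal{O}/\mathfrak{p}^{t-1})\rvert< q^{3}\cdot q^{3(t-1)}=\N(\mathfrak{p})^{3t}$, follows from the kernel bound and the strict inductive hypothesis alone.
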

 
\begin{proof}
The quotient ring $\mathcal{O}/\mathfrak{p}^t$ is a local ring with maximal ideal $\mathfrak{p}/\mathfrak{p}^{t}$ $\lvert\mathcal{O}/\mathfrak{p}^{t}\rvert=\N(\mathfrak{p})^{t}$ and $\lvert\mathfrak{p}/\mathfrak{p}^{t}\rvert=\N(\mathfrak{p})^{t-1}.$\\

With the condition $ad-bc=1$ for $a,b,c,d\in\mathcal{O}/\mathfrak{p}^{t}$, we will compute $\lvert \SL_{2}(\mathcal{O}/\mathfrak{p}^{t})\rvert$ directly separating the analysis into two cases:\\

$c\in\mathfrak{p}/\mathfrak{p}^{t}$:\\ 
In this case neither $a$ nor $d$ are in $\mathfrak{p}/\mathfrak{p}^{t}$ hence they are units in $\mathcal{O}/\mathfrak{p}^{t}$, therefore, since $a$, $b$ and $c$ determine $d$ and there are $\N(\mathfrak{p})^{t-1}$ possibilities for $c$, $\N(\mathfrak{p})^{t}$ possibilities for $b$ and $\N(\mathfrak{p})^{t}-\N(\mathfrak{p})^{t-1}$ possibilities for $a$, we have $\N(\mathfrak{p})^{3t-2}(\N(\mathfrak{p})-1)$ matrices in $\SL_{2}(\mathcal{O}/\mathfrak{p}^{t})$ with $c\in\mathfrak{p}/\mathfrak{p}^{t}$.\\

$c\notin\mathfrak{p}/\mathfrak{p}^{t}$:\\
In this case $c$ is a unit and we have $\N(\mathfrak{p})^{t-1}(\N(\mathfrak{p})-1)$ possibilities for $c$; since $a$, $c$ and $d$ determine $b$ and there are $\N(\mathfrak{p})^{t}$ possibilities for $a$ and $d$ , we have  $\N(\mathfrak{p})^{3t-1}(\N(\mathfrak{p})-1)$ elements in $\SL_{2}(\mathcal{O}/\mathfrak{p}^{t})$ with $c\notin\mathfrak{p}/\mathfrak{p}^{t}.$\\

In conclusion, $\lvert \SL_{2}(\mathcal{O}/\mathfrak{
p}^{t})\rvert=\N(\mathfrak{p})^{3t}\left(1-\frac{1}{\N(\mathfrak{p})^{2}}\right) < \N(\mathfrak{p})^{3t}.$

\end{proof}

As a consequence of the Chinese Remainder Theorem and the fact that the norm of ideals is multiplicative, we obtain the following

\begin{corollary}
For any ideal $I\subset\mathcal{O}$, $\lvert \SL_{2}(\mathcal{O}/I)\rvert<\N(I)^{3}$
\end{corollary}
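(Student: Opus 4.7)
The plan is to reduce immediately to the prime power case already handled by the preceding lemma, and then assemble the bound by multiplicativity.

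First, I would write the unique factorization $I = \mathfrak{p}_{1}^{r_{1}}\cdots \mathfrak{p}_{s}^{r_{s}}$ of $I$ into prime ideals of $\mathcal{O}$. Since the prime powers $\mathfrak{p}_i^{r_i}$ are pairwise coprime, the Chinese Remainder Theorem (which is invoked right before the corollary) yields a ring isomorphism $\mathcal{O}/I \cong \prod_{i=1}^{s} \mathcal{O}/\mathfrak{p}_i^{r_i}$, and applying the functor $\SL_2$ to this product decomposition gives the group isomorphism
\begin{equation*}
\SL_{2}(\mathcal{O}/I)\;\cong\;\SL_{2}(\mathcal{O}/\mathfrak{p}_{1}^{r_{1}})\times\cdots\times \SL_{2}(\mathcal{O}/\mathfrak{p}_{s}^{r_{s}}).
\end{equation*}
Taking cardinalities, $|\SL_{2}(\mathcal{O}/I)| = \prod_{i=1}^{s} |\SL_{2}(\mathcal{O}/\mathfrak{p}_{i}^{r_{i}})|$.

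Next, I would apply the prime-power lemma just proved to each factor, which gives $|\SL_{2}(\mathcal{O}/\mathfrak{p}_{i}^{r_{i}})| < \N(\mathfrak{p}_{i})^{3 r_{i}}$ for every $i$. Multiplying these bounds together yields
\begin{equation*}
|\SL_{2}(\mathcal{O}/I)| \;<\; \prod_{i=1}^{s}\N(\mathfrak{p}_{i})^{3 r_{i}} \;=\;\Bigl(\prod_{i=1}^{s}\N(\mathfrak{p}_{i})^{r_{i}}\Bigr)^{3}.
\end{equation*}
Finally, by the multiplicativity of the ideal norm on $\mathcal{O}$, $\N(I) = \prod_{i=1}^{s}\N(\mathfrak{p}_{i})^{r_{i}}$, and substituting gives $|\SL_{2}(\mathcal{O}/I)| < \N(I)^{3}$, as claimed.

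There is no genuine obstacle here: the work has already been done in the preceding lemma, and what remains is purely bookkeeping. The only small point one should be careful about is that the strict inequality is preserved when one multiplies the factorwise bounds, which is immediate since each factor $|\SL_{2}(\mathcal{O}/\mathfrak{p}_{i}^{r_{i}})|$ is strictly less than $\N(\mathfrak{p}_{i})^{3 r_{i}}$ and all quantities are positive.
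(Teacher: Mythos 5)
Your proof is correct and follows exactly the paper's argument: the paper derives this corollary from the prime-power lemma via the Chinese Remainder Theorem decomposition and multiplicativity of the ideal norm, precisely as you do. Your write-up merely spells out the bookkeeping that the paper leaves implicit.
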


Now, by definition $[\Gamma:\Gamma(I)]\leq\lvert \SL_{2}(\mathcal{O}/I)\rvert$, hence we have proved

\begin{corollary}\label{index and norm}
If $I$ is any ideal of $\mathcal{O}$, then $[\Gamma:\Gamma(I)]< \N(I)^{3}.$ \\
\end{corollary}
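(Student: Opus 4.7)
The plan is very short because almost all the work has already been done. The homomorphism $\pi_I\colon \SL_2(\mathcal{O})\to \SL_2(\mathcal{O}/I)$ has kernel $\Gamma(I)$ by definition, so the first isomorphism theorem gives an injection
\begin{equation*}
\Gamma/\Gamma(I) \hookrightarrow \SL_2(\mathcal{O}/I).
\end{equation*}
In particular, $[\Gamma:\Gamma(I)] = |\Gamma/\Gamma(I)| \leq |\SL_2(\mathcal{O}/I)|$.

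The previous corollary has already established the bound $|\SL_2(\mathcal{O}/I)| < \N(I)^3$ for arbitrary $I$, obtained by factoring $I = \mathfrak{p}_1^{r_1}\cdots\mathfrak{p}_s^{r_s}$ into prime powers, applying the Chinese Remainder Theorem to decompose $\SL_2(\mathcal{O}/I)$ as a product of the local factors $\SL_2(\mathcal{O}/\mathfrak{p}_i^{r_i})$, invoking the preceding lemma that gives $|\SL_2(\mathcal{O}/\mathfrak{p}^t)| < \N(\mathfrak{p})^{3t}$, and using multiplicativity of the ideal norm $\N(I) = \prod_i \N(\mathfrak{p}_i)^{r_i}$.

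Combining these two observations yields $[\Gamma:\Gamma(I)] \leq |\SL_2(\mathcal{O}/I)| < \N(I)^3$, which is the claimed inequality. There is no real obstacle here; the only subtle point worth remarking on is that the map $\pi_I$ need not be surjective in general, but since we only need an upper bound on the index, injectivity of the induced quotient map is all that matters.
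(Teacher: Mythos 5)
Your argument is correct and is essentially the paper's own: the paper likewise notes that $[\Gamma:\Gamma(I)]\leq\lvert\SL_{2}(\mathcal{O}/I)\rvert$ (via the injection induced by $\pi_{I}$) and combines this with the preceding corollary $\lvert\SL_{2}(\mathcal{O}/I)\rvert<\N(I)^{3}$. Your remark that surjectivity of $\pi_{I}$ is not needed is a fair and accurate observation, but nothing further is required.
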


\section{\textbf{Proof of the main result}}\label{proofoftheorem} 

Now we can finish the proof of the theorem

\begin{theorem}\label{maintheorem}
Let $K$ be a totally real number field of degree $n$ and $\mathcal{O}$ the ring of integers of $K$. Any sequence of ideals of $\mathcal{O}$ with $\N(I)\rightarrow\infty$ eventually satisfies

$$\sys\pi_{1}(M_{I})\geq \frac{4}{3\sqrt{n}}\log([\Gamma:\Gamma(I)]) -c,$$

where $\Gamma(I)$ is the principal congruence subgroup of $\Gamma=\SL_{2}(\mathcal{O})$ associated to $I$, $M_{I}=(\mathbb{H}^{2})^{n}/\Gamma(I)$
and $c$ is a constant independent of $I$.\\
\end{theorem}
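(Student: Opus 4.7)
The plan is to combine directly the two main results already established: the distance estimate of Corollary \ref{maincorollary} and the index bound of Corollary \ref{index and norm}. All the substantive work has been done; what remains is to arrange the inequalities and verify that the constant $c$ depends only on $n$ (equivalently, only on $K$) and not on $I$.

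First, I would fix the sequence $\{I\}$ with $\N(I)\to\infty$ and discard the finitely many ideals with $\N(I)<40^{n/2}$, so that from some point on, both Corollary \ref{riemannian structure} (requiring $\N(I)\geq 4^n$) and Corollary \ref{maincorollary} (requiring $\N(I)\geq 40^{n/2}$) apply. In particular, $M_{I}$ carries a genuine Riemannian structure, and the explicit hyperbolic element constructed in Section \ref{closedgeodesics} shows that non-contractible closed geodesics exist in $M_{I}$; therefore $\sys\pi_{1}(M_{I})$ is a well-defined finite quantity.

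Next, since Corollary \ref{maincorollary} gives a uniform lower bound on the length of every non-contractible closed geodesic of $M_{I}$, the same bound passes to the infimum, so
$$\sys\pi_{1}(M_{I})\;\geq\;\frac{4}{\sqrt{n}}\log(\N(I))-2\sqrt{n}\log(40).$$
Corollary \ref{index and norm} gives $[\Gamma:\Gamma(I)]<\N(I)^{3}$, equivalently $\log(\N(I))>\tfrac{1}{3}\log([\Gamma:\Gamma(I)])$. Substituting produces
$$\sys\pi_{1}(M_{I})\;\geq\;\frac{4}{3\sqrt{n}}\log([\Gamma:\Gamma(I)])-2\sqrt{n}\log(40),$$
so the statement holds with $c=2\sqrt{n}\log(40)$, which depends only on the degree $n$ of $K$ and not on $I$.

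There is essentially no obstacle left; the only minor point to handle carefully is the remark that the systole, which is an infimum in the non-compact setting, inherits the pointwise lower bound because Corollary \ref{maincorollary} bounds \emph{every} non-contractible closed geodesic from below, not just the shortest. The non-compactness of $M_{I}$ therefore does not interfere with the argument, and the proof reduces to the chain of inequalities above.
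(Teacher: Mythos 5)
Your argument is correct and is essentially the paper's own proof: discard small ideals, apply Corollary \ref{riemannian structure}, combine Corollary \ref{maincorollary} with Corollary \ref{index and norm}, and take $c=2\sqrt{n}\log(40)$. The extra remarks about the existence of closed geodesics and the infimum inheriting the bound are sensible but do not change the route.
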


\begin{proof}
For any ideal $I$ with $\N(I)\geq 40^{\frac{n}{2}}$,  Corollary \ref{riemannian structure} implies that $M_{I}$ is a Riemannian manifold with the metric induced by the product metric on $(\mathbb{H}^{2})^{n}$. Now, by Corollary \ref{maincorollary} and Corollary \ref{index and norm}, we conclude that

$$\sys\pi_{1}(M_{I})\geq \frac{4}{3\sqrt{n}}\log([\Gamma:\Gamma(I)])-2\sqrt{n}\log(40).$$

\end{proof}


\bibliographystyle{siam}
\bibliography{mybiblio}

\vspace{4mm}

IMPA. Estrada Dona Castorina 110, 22460-320, Rio de Janeiro, Brazil.\\
\textit{E-mail address}:\hspace{2mm}\texttt{plinio@impa.br}
\end{document}